\theoremstyle{plain}
\newtheorem{thm}{Theorem}[section]
\newtheorem{lemma}[thm]{Lemma}
\newtheorem{prop}[thm]{Proposition}
\newtheorem{cor}[thm]{Corollary}
\newtheorem{definition}[thm]{Definition}
\newtheorem*{theorem*}{Theorem}
\newtheorem*{prop*}{Proposition}
\newtheorem*{main theorem*}{Main Theorem}
\theoremstyle{definition}
\newtheorem{remark}[thm]{Remark}
\newtheorem{example}[thm]{Example}
\numberwithin{equation}{section}
\numberwithin{figure}{section}
\tikzset{middlearrow/.style={
		decoration={markings,
			mark= at position 0.55 with {\arrow{#1}} ,
		},
		postaction={decorate}
	}
}
\DeclareMathOperator{\interior}{int}
\begin{document}

\title{Abelian Splittings and JSJ-Decompositions of Finitely Presented Bestvina--Brady Groups}
\author{Yu-Chan Chang}
\address{Department of Mathematics and Computer Science \\ Wesleyan University \\  Middletown, CT 06459}
\email{yuchanchang74321@gmail.com}

\maketitle

\begin{abstract}
We give a characterization of finitely presented Bestvina--Brady groups which split over abelian subgroups and describe the JSJ-decompositions of those Bestvina--Brady groups.
\end{abstract}

\section{Introduction}

Given a finite simplicial graph $\Gamma$, the associated \emph{right-angled Artin group (RAAG)} $A_{\Gamma}$ is generated by the vertex set $V(\Gamma)$ of $\Gamma$, and the relators are commutators $[v,w]$ whenever $(v,w)\in E(\Gamma)$. RAAGs have been extensively studied because they contain many interesting subgroups; see \cite{charneyanintroductiontorightangledartingroups} for an introduction to RAAGs. In this article, we study one of the subgroups of RAAGs known as \emph{Bestvina--Brady groups}. Let $\phi\colon A_{\Gamma}\to\mathbb{Z}$ be the group homomorphism which sends all the generators to $1$. The Bestvina--Brady group $BB_{\Gamma}$ is defined to be the kernel of $\phi$. Bestvina--Brady groups have an important connection to topology: there are Bestvina--Brady groups that are either counterexamples to the Eilenberg--Ganea Conjecture or Whitehead Conjecture; see \cite[Theorem 8.7]{bestvinabradymoresetheoryandfinitenesspropertiesofgroups}. 

We say that a group $G$ \emph{splits} over a subgroup $C$ if $G$ decomposes as an amalgamated product $G=A\ast_{C}B$ with $A\neq C$, $B\neq C$ or an HNN-extension $G=A\ast_{C}$. If the subgroup $C$ is abelian, then we way that $G$ has an \emph{abelian splitting}. Clay \cite{claywhendoesarightangledartingroupsplitoverz} showed that a RAAG splits over $\mathbb{Z}$ if and only if its defining graph has a cut-vertex. Groves and Hull generalized Clay's result to  abelian subgroups of higher rank. They proved that a RAAG splits over an abelian subgroup if and only if its defining graph contains a separating clique \cite{grovesandhullabeliansplittingsofraags}. Recently, Hull \cite{HullSplittingsofRAAGs} studied the splittings of RAAGs over non-abelian subgroups. In a more general setting, Barquinero, Ruffoni, and Ye \cite{BarquineroRuffoniYeGraphicalSplittingsofArtinKernels} studied the splittings of Artin kernels, which are generalizations of Bestvina--Brady groups. In this paper, we characterize the abelian splittings of finitely presented Bestvina--Brady groups:

\begin{theorem*}{(Theorem~\ref{abelian splittings of BBG})}
Let $\Gamma$ be a finite simplicial connected graph such that the flag complex on $\Gamma$ is simply connected. The Bestvina--Brady group $BB_{\Gamma}$ splits over an abelian subgroup if and only if $\Gamma$ satisfies one of the following:
\begin{enumerate}
\item $\Gamma$ a cut-vertex;
\item $\Gamma$ is a complete graph;
\item $\Gamma$ has a separating clique $K_{n}$, $n\geq2$.
\end{enumerate}
\end{theorem*}

The strategy of proving Theorem~\ref{abelian splittings of BBG} is similar to the one in \cite{grovesandhullabeliansplittingsofraags}. The main difference is that we focus on edges instead of vertices, since finitely generated Bestvina--Brady groups are generated by (directed) edges; see Theorem~\ref{Dicks--Leary presentations}. We want to point out that the cut-vertices and separating cliques of sizes greater or equal to two are distinguished in our result, but not in \cite{grovesandhullabeliansplittingsofraags}; see Remark~\ref{remark on distinguishing cut-vertices and cliques}.

Similar to the situation of RAAGs, some splittings of Bestvina--Brady groups can be seen directly from the defining graphs. If $\Gamma$ has a cut-vertex, then $BB_{\Gamma}$ splits as a free product. If $\Gamma$ is a complete graph on $n$ vertices, then $BB_{\Gamma}\cong\mathbb{Z}^{n-1}$ (see Example~\ref{example of abelian group for D-I presentation}) and splits as an HNN-extention. If $\Gamma$ contains a separating clique $K$ and $\Gamma\setminus K$ is a disjoint union of two induced subgraphs $\Gamma_{1}$ and $\Gamma_{2}$, then $BB_{\Gamma}$ splits as an amalgamated product $BB_{\Gamma_{1}\cup K}\ast_{BB_{K}}BB_{\Gamma_{2}\cup K}$. Note that $BB_{K}$ is a free abelian group. These observations rely on the \emph{Dicks--Leary presentation} for Bestvina--Brady groups; see Theorem~\ref{Dicks--Leary presentations}. 

A \emph{JSJ-decomposition} of a group $G$ is a graph of groups decomposition that encodes all the possible splittings of $G$ over a fixed family of subgroups. We refer the reader to \cite{guirardelandlevittjsjdecompositionsofgroups} for a comprehensive introduction to the JSJ theory of groups. For some groups, their JSJ-trees (the Bass--Serre tree of a JSJ-decomposition) is a quasi-isometry invariant. Bowditch \cite{bowditchcutpointsandcanonicalsplittingsofhyperbolicgroups} constructed a canonical JSJ-splitting of $1$-ended hyperbolic groups over $2$-ended subgroups, and the JSJ-tree for this splitting is a quasi-isometry invariant. Dani and Thomas \cite{danithomasbowditchsjsjtreeandthequasiisometryclassificationofcertaincoxetergroups} used Bowditch's JSJ-tree to classify certain hyperbolic right-angled Coxeter groups up to quasi-isometry. Given a finite simplicial connected graph $\Gamma$, the authors in \cite{grovesandhullabeliansplittingsofraags} studied an action of $A_{\Gamma}$ on a tree $T$ such that each vertex of $\Gamma$ acts elliptically with abelian edge stabilizers. They built a JSJ-decomposition for $A_{\Gamma}$ and called it a \emph{vertex-elliptic abelian JSJ-decomposition}. If we restrict the action of $A_{\Gamma}$ on $T$ to $BB_{\Gamma}$, then each edge of $\Gamma$ also acts on $T$ elliptically, and the edge stabilizers are still abelian groups. Borrowing the terminologies from \cite{grovesandhullabeliansplittingsofraags}, we describe an \emph{edge-elliptic abelian JSJ-decomposition} for $BB_{\Gamma}$; see Theorem~\ref{edge-elliptic abelian JSJ-decomposition of BBGs}.

The paper is organized as follows. In Section~\ref{section Preliminaries}, we give some preliminaries on Bestvina--Brady groups and groups acting on trees. In Section~\ref{section abelian splittings of bestvina--brady groups}, we prove Theorem~\ref{abelian splittings of BBG}. In Section~\ref{section JSJ-decomposition of bestvina--brady groups}, we give a JSJ-decomposition for the Bestvina--Brady groups described in Theorem ~\ref{abelian splittings of BBG}.

\section{Preliminaries}\label{section Preliminaries}

\subsection{Bestvina--Brady Groups}\label{subsection Bestvina--Brady groups}

Let $\Gamma$ be a finite simplicial graph. Recall that the Bestvina--Brady group $BB_{\Gamma}$ is defined to be the kernel of the group homomorphism $\phi\colon A_{\Gamma}\to\mathbb{Z}$, which sends all the generators to $1$. The main result of \cite{bestvinabradymoresetheoryandfinitenesspropertiesofgroups} states that $\Gamma$ is connected if and only if $BB_{\Gamma}$ is finitely generated; and the flag complex on $\Gamma$ is simply connected if and only if $BB_{\Gamma}$ is finitely presented. When a Bestvina--Brady group is finitely presented, it admits the following finite presentation, called the \emph{Dicks--Leary presentation}. 

\begin{thm}\textnormal{(\cite[Corollary 3]{dickslearypresentationsforsubgroupsofartingroups})}\label{Dicks--Leary presentations}
Let $\Gamma$ be a finite simplicial connected graph whose associated flag complex is simply connected. Then the group $BB_{\Gamma}$ is finitely presented, and it is generated by all the directed edges of $\Gamma$. The relators are of the form $e_{1}e_{2}=e_{3}=e_{2}e_{1}$, where $(e_{1},e_{2},e_{3})$ form a directed triangle in $\Gamma$; see Figure~\ref{Directed triangle}. Moreover, each directed edge $e=(v,w)$, as a generator of $BB_{\Gamma}$, embeds into $A_{\Gamma}$ as $e=vw^{-1}$.

\begin{figure}[H]
\begin{center}
\begin{tikzpicture}[scale=0.5]
\draw [thick] [middlearrow={stealth}] (4,3)--(0,0);
\draw [thick] [middlearrow={stealth}] (0,0)--(6,0);
\draw [thick] [middlearrow={stealth}] (4,3)--(6,0);

\draw [fill] (0,0) circle (4pt);
\draw [fill] (4,3) circle (4pt);
\draw [fill] (6,0) circle (4pt);

\node [left] at (2,2) {$e_{1}$};
\node at (3,-1) {$e_{2}$};
\node [right] at (5,2) {$e_{3}$};

\end{tikzpicture}
\end{center}
\caption{A directed triangle $(e_{1},e_{2},e_{3})$.}
\label{Directed triangle}
\end{figure}
\end{thm}

\begin{remark}
For the rest of the paper, every edge of a graph $\Gamma$, either undirected or directed, will be identified as an element of $BB_{\Gamma}$. We will ignore the orientation on $\Gamma$ if it is not being used in the argument. 
\end{remark}

The Dicks--Leary presentation in Theorem~\ref{Dicks--Leary presentations} is not necessary a minimal presentation. In fact, the generating set can be reduced to the set of the directed edges of a spanning tree of $\Gamma$, and all the relators are commutators; see \cite[Corollary 2.3]{papadimaandsuciualgebraicinvariantsforbestvinabradygroups}. 

\begin{example}\label{example of free group for D-I presentation}
Let $\Gamma$ be a tree on $n$ vertices, then $\Gamma$ has $n-1$ edges. Since $\Gamma$ has no triangles, the group $BB_\Gamma$ has no relators. Thus, we have $BB_{\Gamma}\cong F_{n-1}$, the free group of rank $n-1$.
\end{example}

\begin{example}\label{example of abelian group for D-I presentation}
Let $\Gamma$ be a clique $K_{n}$. Choose a vertex $v$ and a spanning tree $T$ for $\Gamma$ that consists of all the edges incident to $v$. Then it follows from the Dicks--Leary presentation that $BB_{\Gamma}$ is generated by the directed edges of $T$, and each generator commutes with all other generators. Since $T$ consists of $n-1$ edges, we have $BB_{\Gamma}\cong\mathbb{Z}^{n-1}$.
\end{example}

\begin{example}\label{abelian BBGs implies the defining graph is a clique}
Let $\Gamma$ be a finite simplicial graph and suppose $BB_{\Gamma}\cong\mathbb{Z}^{n-1}$. By definition, we have a split short exact sequence $1\to\mathbb{Z}^{n-1}\to A_{\Gamma}\to\mathbb{Z}\to1$. Equivalently, we have $A_{\Gamma}\cong\mathbb{Z}^{n-1}\oplus\mathbb{Z}\cong\mathbb{Z}^{n}$. Thus, the graph $\Gamma$ is the clique $K_{n}$.
\end{example}

\subsection{Groups acting on trees} 

Let $G$ be a group acting on a tree $T$. We assume that all the actions are without inversions. We say that the tree $T$ is \emph{non-trivial} if the action has no global fixed points. We say that an element $g\in G$ is \emph{elliptic}, or, $g$ acts on $T$ \emph{elliptically}, if $g$ fixes a point in $T$; an element $g\in G$ is \emph{hyperbolic} if it is not elliptic. Similarly, a subgroup $H<G$ is \emph{elliptic}, or, $H$ acts on $T$ \emph{elliptically}, if it fixes a point in $T$. We denote the fixed points set of $g$ and $H$ by $\mathrm{Fix}(g)$ and $\mathrm{Fix}(H)$, respectively. For an elliptic element $g\in G$, the set $\mathrm{Fix}(g)$ is a subtree of $T$. When $g\in G$ is hyperbolic, it preserves a line in $T$ on which it acts by translation. We call the invariant line an \emph{axis} of $g$ and denote it by $\mathrm{Axis}(g)$. The following two lemmas will be used frequently.

\begin{lemma}\textnormal{(\cite[Lemma 1.1]{cullervogtmannagrouptheoreticcriterionforpropertyFA})}\label{commutating element preserves axis}
Let $G$ be a group acting on a tree $T$. If $h\in G$ is hyperbolic and $g\in G$ commutes with $h$, then $\mathrm{Axis}(h)\subseteq\mathrm{Fix}(g)$. In particular, if $g$ and $h$ are commuting hyperbolic elements, then $\mathrm{Axis}(g)=\mathrm{Axis}(h)$.
\end{lemma}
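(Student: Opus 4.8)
The plan is to reduce the statement to the geometry of a single line, $L := \mathrm{Axis}(h)$, and then run a short three-part argument. First I would show that $g$ stabilizes $L$. Axes of hyperbolic isometries are equivariant: for any isometry $f$ of $T$ one has $f(\mathrm{Axis}(h)) = \mathrm{Axis}(f h f^{-1})$, with the translation length carried along. Since $g$ commutes with $h$ we have $ghg^{-1} = h$, so $g(L) = \mathrm{Axis}(h) = L$; hence $g$ restricts to a simplicial isometry of the bi-infinite line $L$, on which $h$ acts as a nontrivial translation.

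Next I would pin down what $g|_L$ can be. A simplicial isometry of a line is either a translation (possibly the identity) or a reflection, and since the action on $T$ has no inversions a reflection here fixes a vertex of $L$ rather than an edge midpoint. A reflection of a line does not commute with any nontrivial translation of that line --- a direct check on the action on vertices --- so, as $g|_L$ commutes with $h|_L$, the restriction $g|_L$ must be a translation, say by some integer $t$.

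Finally I would show $t = 0$. Since $g$ is elliptic, choose $x \in \mathrm{Fix}(g)$ and let $p$ be its nearest-point projection onto $L$ (unique, since $L$ is a subtree of a tree). Because $g$ is an isometry fixing $x$ and preserving $L$, the point $gp$ is the nearest-point projection of $gx = x$ onto $gL = L$, so $gp = p$ by uniqueness. Thus $g|_L$ fixes the vertex $p$, which forces $t = 0$, i.e.\ $g$ acts as the identity on $L$; equivalently $\mathrm{Axis}(h) = L \subseteq \mathrm{Fix}(g)$, as claimed.

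The argument is entirely elementary and I do not expect a genuine obstacle; the two places to be careful are the use of the no-inversions hypothesis when classifying $g|_L$ (otherwise a reflection through an edge midpoint would need a separate line of reasoning), and the use, in the last step, that $\mathrm{Fix}(g) \neq \emptyset$ --- that is, that $g$ is elliptic --- which is precisely the hypothesis making the conclusion hold ($g = h$ shows it can fail otherwise) and is the setting in which the lemma is invoked later in the paper.
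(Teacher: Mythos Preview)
The paper does not prove this lemma itself; it is quoted from Culler--Vogtmann without argument, so there is no in-paper proof to compare against. Your three-step proof --- equivariance of axes gives $g(L)=L$; commutation with the nontrivial translation $h|_L$ rules out a reflection, so $g|_L$ is a translation; the nearest-point projection onto $L$ of any point of $\mathrm{Fix}(g)$ is itself $g$-fixed, forcing that translation to be trivial --- is correct and is essentially the standard argument. Your closing remark is also well taken: the conclusion genuinely requires $g$ to be elliptic (your example $g=h$ shows this), a hypothesis the paper's statement omits. In the paper's sole application of the lemma, inside the proof that $\mathrm{star}(e)$ is a clique, the elements $e_1,e_2$ playing the role of $g$ are not first checked to be elliptic, so your flagging of this point is worthwhile rather than pedantic.
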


\begin{lemma}\textnormal{(\cite[Lemma 1.1]{grovesandhullabeliansplittingsofraags}}\label{product of commutating elliptic elements is also elliptic}
Let $G$ be a group acting on a tree $T$. If $g_{1},g_{2}\in G$ are commuting elliptic elements, then $\mathrm{Fix}(g_{1})\cap\mathrm{Fix}(g_{2})\neq\phi$ and $g_{1}g_{2}$ is also an elliptic element.
\end{lemma}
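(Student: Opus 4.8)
The plan is to establish the two assertions in order: first that $\mathrm{Fix}(g_{1})\cap\mathrm{Fix}(g_{2})\neq\emptyset$, and then note that the ellipticity of $g_{1}g_{2}$ is an immediate consequence. Write $Y_{i}=\mathrm{Fix}(g_{i})$; as recalled in the discussion preceding the lemma, each $Y_{i}$ is a non-empty subtree of $T$ (non-empty because $g_{i}$ is elliptic). The key structural input is that commuting isometries stabilize each other's fixed sets: since $g_{1}g_{2}g_{1}^{-1}=g_{2}$, we get $g_{1}\cdot Y_{2}=\mathrm{Fix}(g_{1}g_{2}g_{1}^{-1})=Y_{2}$, and symmetrically $g_{2}\cdot Y_{1}=Y_{1}$. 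So $g_{1}$ fixes $Y_{1}$ pointwise while stabilizing $Y_{2}$ setwise, and vice versa.

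Next I would argue by contradiction, assuming $Y_{1}\cap Y_{2}=\emptyset$. In a tree, two disjoint subtrees are joined by a unique geodesic segment (the ``bridge'') $[p_{1},p_{2}]$ with $p_{i}\in Y_{i}$ realizing the distance between $Y_{1}$ and $Y_{2}$; here $p_{i}$ is precisely the nearest-point projection of $p_{3-i}$ onto $Y_{i}$, and such a projection is single-valued. Since $g_{1}$ fixes $p_{1}$ and stabilizes the subtree $Y_{2}$, and nearest-point projection onto a subtree is equivariant with respect to any isometry stabilizing that subtree, $g_{1}$ must fix $p_{2}$. Hence $p_{2}\in\mathrm{Fix}(g_{1})=Y_{1}$, contradicting $p_{2}\in Y_{2}$ together with $Y_{1}\cap Y_{2}=\emptyset$. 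Therefore $Y_{1}\cap Y_{2}\neq\emptyset$. (Equivalently, one can avoid projections and track the action directly: $g_{2}$ fixes $p_{2}$ and stabilizes $Y_{1}$, hence maps the unique geodesic from $p_{2}$ to $Y_{1}$ to itself, so it fixes that geodesic's endpoint $p_{1}$, forcing $p_{1}\in\mathrm{Fix}(g_{2})=Y_{2}$.)

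Finally, choosing any $x\in Y_{1}\cap Y_{2}$ gives $g_{1}g_{2}\cdot x=g_{1}\cdot x=x$, so $g_{1}g_{2}$ fixes a point of $T$ and is elliptic. I expect the only delicate point to be the disjoint-subtrees step, namely making sure the bridge and the points $p_{1},p_{2}$ are genuinely canonical so that an isometry stabilizing the relevant subtrees respects them — this is the standard fact that in the geometric realization of $T$ (a $0$-hyperbolic geodesic space) nearest-point projection onto a convex subset is well-defined and isometry-equivariant. Everything else is formal bookkeeping with fixed-point subtrees.
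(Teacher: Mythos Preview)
Your proof is correct and follows essentially the same route as the paper: assume the fixed subtrees are disjoint, take the unique geodesic bridge between them, use that commuting elements stabilize each other's fixed sets to force an endpoint of the bridge into both subtrees, and then deduce ellipticity of $g_{1}g_{2}$ from any common fixed point. If anything, your version is slightly more careful in distinguishing setwise stabilization from pointwise fixing, which the paper's proof elides.
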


The next two lemmas will be used in Section~\ref{section JSJ-decomposition of bestvina--brady groups}.

\begin{lemma}\label{generators of RAAG are elliptic implies generators of BBG are elliptic}
Let $\Gamma$ be a finite simplicial graph, and let $A_{\Gamma}$ act on a tree $T$. If all the generators of $A_{\Gamma}$ are elliptic, then all the generators of $BB_{\Gamma}$ are also elliptic.
\end{lemma}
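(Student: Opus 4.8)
The plan is to use the Dicks--Leary presentation (Theorem \ref{Dicks--Leary presentations}) to write each generator of $H_{\Gamma}$ — a directed edge $e$ of $\Gamma$ with initial vertex $v$ and terminal vertex $w$ — in terms of the generators of $A_{\Gamma}$, namely $e = v w^{-1}$. Since $v$ and $w^{-1}$ are both elliptic (the latter because $\mathrm{Fix}(w^{-1}) = \mathrm{Fix}(w)$), it suffices to show that the product $v w^{-1}$ is again elliptic. The obvious tool is Lemma \ref{product of commutating elliptic elements is also elliptic}, but that lemma requires $v$ and $w^{-1}$ to \emph{commute}, which need not hold for an arbitrary edge of $\Gamma$ — only adjacent vertices commute in $A_{\Gamma}$, and $v$ and $w$ are adjacent precisely because $e$ is an edge. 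So in fact $v$ and $w$ \emph{do} commute (they span an edge of $\Gamma$), hence $v$ and $w^{-1}$ commute, and Lemma \ref{product of commutating elliptic elements is also elliptic} applies directly: $e = v w^{-1}$ is elliptic.

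First I would fix a directed edge $e$ of $\Gamma$ and name its endpoints $v, w$, recalling from Theorem \ref{Dicks--Leary presentations} that $e$ embeds in $A_{\Gamma}$ as $vw^{-1}$. Next I would observe that because $\{v,w\}$ is an edge of $\Gamma$, the generators $v$ and $w$ commute in $A_{\Gamma}$, and therefore so do $v$ and $w^{-1}$. Then I would invoke the hypothesis that all generators of $A_{\Gamma}$ act elliptically on $T$, together with the fact that $\mathrm{Fix}(w^{-1}) = \mathrm{Fix}(w) \neq \emptyset$, to conclude that $v$ and $w^{-1}$ are commuting elliptic elements. Applying Lemma \ref{product of commutating elliptic elements is also elliptic} to the pair $(v, w^{-1})$ gives that $vw^{-1} = e$ is elliptic. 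Since $e$ was an arbitrary directed edge and these generate $H_{\Gamma}$ by Theorem \ref{Dicks--Leary presentations}, all generators of $H_{\Gamma}$ are elliptic.

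There is essentially no obstacle here: the statement is a short bookkeeping argument combining the Dicks--Leary generating set with the commuting-elliptic-product lemma. The only point requiring a moment's care is the identification $\mathrm{Fix}(w^{-1}) = \mathrm{Fix}(w)$ (immediate, since $g x = x \iff g^{-1} x = x$) and the reminder that the generator $e$ of $H_{\Gamma}$ literally equals the element $vw^{-1}$ of $A_{\Gamma}$, so ellipticity of $e$ as an element of $A_{\Gamma}$ — with respect to whatever action on $T$ is in play — is the same as ellipticity of $e$ as a generator of $H_{\Gamma}$ under the restricted action.
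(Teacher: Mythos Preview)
Your proposal is correct and follows essentially the same approach as the paper: identify the generator $e$ with $vw^{-1}$ via Dicks--Leary, note that $v$ and $w$ commute because they span an edge, and apply Lemma~\ref{product of commutating elliptic elements is also elliptic}. Your version is in fact slightly cleaner than the paper's, which applies the lemma to the pair $(v,w)$ to get that $vw$ is elliptic and then asserts ``hence $e=vw^{-1}$ is elliptic'' without comment; you apply the lemma directly to $(v,w^{-1})$, which avoids that small gap.
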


\begin{proof}
Let $e=(v,w)\in E(\Gamma)$ be a directed edge. Since $v$ and $w$ are commuting elliptic elements, so are $v$ and $w^{-1}$. Therfore, the element $e=vw^{-1}$ is elliptic by Lemma~\ref{product of commutating elliptic elements is also elliptic}. 
\end{proof}

\begin{lemma}\label{H_K is elliptic for any clique K}
Let $\Gamma$ be a finite simplicial graph. If $BB_{\Gamma}$ acts on a tree $T$ such that each generator is elliptic, then $BB_{K}$ is elliptic for any clique $K$ in $\Gamma$.
\end{lemma}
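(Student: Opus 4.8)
The plan is to use the Dicks--Leary description of the generators together with Lemma \ref{product of commutating elliptic elements is also elliptic} and the Helly property of subtrees of a tree. First I would recall that, by Theorem \ref{Dicks--Leary presentations}, the directed edges of $\Gamma$ generate $H_{\Gamma}$, and the directed edges lying in the clique $K$ generate the subgroup $H_{K}$. By Example \ref{example of abelian group for D-I presentation}, $H_{K}\cong\mathbb{Z}^{|K|-1}$ is free abelian; in particular, any two directed edges $e,f$ of $K$ commute in $H_{\Gamma}$. Since each directed edge of $\Gamma$ is a generator of $H_{\Gamma}$, the hypothesis gives that every directed edge $e$ of $K$ is elliptic, so $\mathrm{Fix}(e)$ is a non-empty subtree of $T$.

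Next, for any two directed edges $e,f$ of $K$, Lemma \ref{product of commutating elliptic elements is also elliptic} applied to the commuting elliptic elements $e$ and $f$ yields $\mathrm{Fix}(e)\cap\mathrm{Fix}(f)\neq\phi$. Thus the finite collection of subtrees $\{\mathrm{Fix}(e) : e \text{ a directed edge of } K\}$ pairwise intersects. I would then invoke the Helly property for subtrees of a tree (finitely many pairwise-intersecting subtrees have a common point) to conclude that $\bigcap_{e}\mathrm{Fix}(e)\neq\phi$, the intersection being taken over all directed edges $e$ of $K$. Any point $x$ in this intersection is fixed by every generator of $H_{K}$, hence by all of $H_{K}$, so $H_{K}$ is elliptic.

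If one prefers to avoid citing the Helly property directly, the same conclusion follows by induction on the number of directed edges $e_{1},\dots,e_{m}$ of $K$: assuming $S=\bigcap_{i=1}^{j}\mathrm{Fix}(e_{i})$ is a non-empty subtree, note that $e_{j+1}$ commutes with each $e_{i}$ and therefore preserves each $\mathrm{Fix}(e_{i})$ setwise, hence preserves $S$ setwise; since $e_{j+1}$ is elliptic on $T$ and the action is without inversions, its restriction to the subtree $S$ cannot be hyperbolic, so $e_{j+1}$ fixes a point of $S$, giving $S\cap\mathrm{Fix}(e_{j+1})\neq\phi$.

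The only real obstacle is the passage from pairwise intersection to a common fixed point; everything else is bookkeeping about the generating set of $H_{K}$. The Helly property of subtrees (equivalently, the inductive argument above) handles exactly this step, and it is essential that $K$ is a clique so that $H_{K}$ is abelian and Lemma \ref{product of commutating elliptic elements is also elliptic} applies to every pair of its edge-generators.
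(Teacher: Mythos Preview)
Your proof is correct and follows essentially the same route as the paper: use that $H_{K}$ is free abelian, apply Lemma~\ref{product of commutating elliptic elements is also elliptic} to get pairwise intersections of the fixed subtrees, and then pass to a common fixed point. The only cosmetic difference is that the paper cites Serre (\emph{Trees}, p.~64, Corollary~2) for the last step---a finitely generated group whose generators and pairwise products are all elliptic is itself elliptic---whereas you invoke the Helly property of subtrees (or your inductive argument) directly; these are the same fact in slightly different packaging.
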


\begin{proof}
Since $K$ is a clique, the group $BB_{K}$ is free abelian. Let $h_{1},\cdots,h_{n}$ be generators for $BB_{K}$. By Lemma~\ref{product of commutating elliptic elements is also elliptic}, the element $h_{i}h_{j}$ is elliptic since $h_{i}$ and $h_{j}$ are commuting elliptic elements for all $i$ and $j$. Thus, the group $BB_{K}$ is elliptic (\cite[p.64, Corollary 2]{serretrees}).
\end{proof}

We end this section by giving one more definition and a lemma. These will be used in the proof of Lemma~\ref{the star of a hyperbolic edge is clique}. Again, let $G$ be a group acting on a tree $T$. We now view $T$ as an $\mathbb{R}$-tree in the sense of \cite{CullerMorganGroupActionsonRTrees}. The \emph{translation length} $||g||$ of $g\in G$ is defined to be the infimum of the distance between $g$ and $gx$ over $x\in T$. If $||g||=0$, then $g$ is elliptic. If $||g||>0$, then $g$ is hyperbolic, and the action of $g$ on its axis translates points by $||g||$.

\begin{lemma}\label{higher power of commuting hyperbolic fix axis}
Let $g,h\in G$ be commuting hyperbolic elements. Then there are two nonzero integers $m$ and $n$ such that the element $g^mh^n$ fixes $\mathrm{Axis}(h)$ pointwisely.
\end{lemma}

\begin{proof}
Since $g$ and $h$ are commuting hyperbolic elements, for any nonzero integers $m$ and $n$, we have $\mathrm{Axis}(g^m)=\mathrm{Axis}(g)=\mathrm{Axis}(h)=\mathrm{Axis}(h^n)$ by Lemma~\ref{commutating element preserves axis}. If $g$ and $h$ move points in the opposite direction, then choose $m=||h||$ and $n=||g||$; if $g$ and $h$ move points in the same direction, then choose $m=||h||$ and $n=-||g||$. Then in both cases, the element $g^mh^n$ fixes $\mathrm{Axis}(h)$ pointwisely. 
\end{proof}

\section{Abelian Splittings of Finitely Presented Bestvina--Brady Groups}\label{section abelian splittings of bestvina--brady groups}

In this section we prove Theorem~\ref{abelian splittings of BBG}. Our proof relies on a series of lemmas. Given a finite simplicial graph $\Gamma$ and let $BB_{\Gamma}$ act on a tree $T$. We call $e\in E(\Gamma)$ a \emph{hyperbolic edge} (respectively \emph{elliptic edge}) if it acts hyperbolically (respectively elliptically) on $T$. The following definition is analogous to the links and stars of vertices.

\begin{definition}\label{definition of link and star of an edge}
Let $\Gamma$ be a finite simplicial graph, and let $e=(u,v)\in E(\Gamma)$. The \emph{link} of $e$, denoted by $\mathrm{link}(e)$, is defined to be the induced subgraph of $\Gamma$ on $\lbrace w\in V(\Gamma) \ \vert \ w \ \text{is adjacent to} \ \\ \text{both} \ u \ \text{and} \ v\rbrace$. The \emph{star} of $e$, denoted by $\mathrm{star}(e)$, is the induced subgraph on $\mathrm{link}(e)\cup\lbrace u,v\rbrace$.
\end{definition}

\begin{example}
Let $\Gamma$ be the graph as shown in Figure~\ref{example of the link of an edge}. Take $e=(a,c)$, then $\mathrm{link}(e)$ is $\lbrace b,d\rbrace$ and $\mathrm{star}(e)$ is the whole graph $\Gamma$. 

\begin{figure}[H]
\begin{tikzpicture}[scale=0.7]
\draw [thick] (-2,0)--(0,0)--(2,0);
\draw [thick] (0,2)--(-2,0);
\draw [thick] (0,2)--(0,0);
\draw [thick] (0,2)--(2,0);

\draw [fill] (0,2) circle (3pt);
\draw [fill] (-2,0) circle (3pt);
\draw [fill] (0,0) circle (3pt);
\draw [fill] (2,0) circle (3pt);

\node [above] at (0,2.1) {$a$};
\node [below left] at (-2,0) {$b$};
\node [below] at (0,-0.1) {$c$};
\node [below right] at (2,0) {$d$};
\end{tikzpicture}
\caption{}
\label{example of the link of an edge}
\end{figure}
\end{example}

We have the following observation.

\begin{lemma}\label{number of hyperbolic in a triangle}
Let $\Gamma$ be a finite simplicial connected graph such that the flag complex on $\Gamma$ is simply connected. Let $BB_{\Gamma}$ act on a tree $T$. 
\begin{enumerate}[ref=(\arabic*)]
\item \label{triangle in star(e) contains at most one elliptic edge} Any triangle in $\Gamma$ containing a hyperbolic edge has at most one elliptic edge. 
\item \label{having two elliptic edges implies three elliptic edges} If a triangle in $\Gamma$ contains two elliptic edges, then the third edge must be elliptic.
\end{enumerate}
\end{lemma}

\begin{proof} \
\begin{enumerate}
\item Let $e_{1},e_{2},e_{3}\in E(\Gamma)$ form a triangle. Suppose that $e_{1}$ is hyperbolic and $e_{2}$ and $e_{3}$ are elliptic. Given an appropriate orientation on $\Gamma$, we have $e_{2}e_{3}=e_{1}=e_{3}e_{2}$. Then it follows from Lemma~\ref{product of commutating elliptic elements is also elliptic} that $e_{1}$ is also elliptic, which is a contradiction.

\item This follows from \ref{triangle in star(e) contains at most one elliptic edge}.
\end{enumerate}
\end{proof}

\begin{lemma}\label{the star of a hyperbolic edge is clique}
Let $\Gamma$ be a finite simplicial connected graph such that the flag complex on $\Gamma$ is simply connected. Suppose that $BB_{\Gamma}$ acts on a tree $T$ with abelian edge stabilizers. If $e\in E(\Gamma)$ is hyperbolic, then $\mathrm{star}(e)$ is a clique.
\end{lemma}

\begin{proof} 
Denote $e=(u,v)$, and let $w_{1}$ and $w_{2}$ be any two vertices in $\mathrm{link}(e)$. By definition, the subsets $\lbrace w_1, u, v\rbrace$ and $\lbrace w_2, u, v\rbrace$ form two triangles in $\Gamma$. Each of these triangles contains at least two hyperbolic edges by Lemma~\ref{number of hyperbolic in a triangle} \ref{triangle in star(e) contains at most one elliptic edge}. Since $e$ is hyperbolic, let $e_1=(u,w_1)$ and $e_2=(v,w_2)$ also be hyperbolic (the argument will be similar for the case when $e_2=(u,w_2)$ is hyperbolic.)  Note that $e$ commutes with $e_1$ and $e_2$ in $BB_\Gamma$ with a suitable orientation on $\Gamma$. By Lemma~\ref{higher power of commuting hyperbolic fix axis}, there are nonzero integers $m_1$, $m_2$, $n_1$, and $n_2$ such that $e^{m_1}_{1}e^{n_1}$ and $e^{m_2}_{2}e^{n_2}$ fix $\mathrm{Axis}(e)$ pointwisely. Then the subgroup generalized by $e^{m_1}_{1}e^{n_1}$ and $e^{m_2}_{2}e^{n_2}$ fixes an edge of $\mathrm{Axis}(e)\subset T$, and therefore, it is an abelian group by the assumption. Moreover, since $e$ commutes with $e_1$ and $e_2$, we have $e^{m_1}_{1}e^{m_2}_{2}=e^{m_2}_{2}e^{m_1}_{1}$. Now consider the corresponding elements $e_1=uw^{-1}_1$ and $e_2=vw^{-1}_{2}$ in $A_\Gamma$. Since $u$ and $v$ commute with each other and both of them commute with $w_1$ and $w_2$ in $A_\Gamma$, the relation $e^{m_1}_{1}e^{m_2}_{2}=e^{m_2}_{2}e^{m_1}_{1}$ implies $w^{m_1}_{1}w^{m_2}_{2}=w^{m_2}_{2}w^{m_1}_{1}$, which holds only when $w_1$ and $w_2$ commute in $A_\Gamma$. That is, the vertices $w_{1}$ and $w_{2}$ are adjacent. Hence, the subgraph $\mathrm{link}(e)$ is a clique, and so is $\mathrm{star}(e)$.
\end{proof}

\begin{lemma}\label{observations of star(e) for hyperbolic edge e}
Assume that $\Gamma$ and $BB_{\Gamma}$ satisfy the assumptions in Lemma~\ref{the star of a hyperbolic edge is clique}, and let $e\in E(\Gamma)$ be hyperbolic. Then
\begin{enumerate}[ref=(\arabic*)]
\item \label{star(e')=star(e) if e' is hyperbolic} If an edge $e'$ in $\mathrm{star}(e)$ is hyperbolic, then $\mathrm{star}(e')=\mathrm{star}(e)$.
\item \label{no paths in Gamma-star(e) connect the end points of a hyperbolic edge in star(e)} If an edge $e'=(v',w')$ in $\mathrm{star}(e)$ is hyperbolic, then there does not exist a path in $\Gamma\setminus\mathrm{star}(e)$ between $v'$ and $w'$.
\end{enumerate}
\end{lemma}

\begin{proof} \
\begin{enumerate}
\item If $e'=e$, then the statement is obvious. Suppose $e'\neq e$. Since both $e$ and $e'$ are hyperbolic, the subgraphs $\mathrm{star}(e)$ and $\mathrm{star}(e')$ are cliques by Lemma~\ref{the star of a hyperbolic edge is clique}. Denote $e=(v,w)$ and $e'=(v',w')$. We have two cases. The first case is that the edges $e$ and $e'$ are incident, say $w=w'$. Then $v$ is in $\mathrm{star}(e')$ by Definition~\ref{definition of link and star of an edge}. For any vertex $u$ in $\mathrm{star}(e')$, the vertices $u$ and $v$ are adjacent since $\mathrm{star}(e')$ is a clique. Thus, the vertex $u$ is in $\mathrm{star}(e)$. Therefore, we have $V(\mathrm{star}(e'))\subseteq V(\mathrm{star}(e))$. Switching the roles of $e$ and $e'$ gives $V(\mathrm{star}(e))\subseteq V(\mathrm{star}(e'))$. Thus, we have $V(\mathrm{star}(e'))=V(\mathrm{star}(e))$. Hence, we obtain $\mathrm{star}(e')=\mathrm{star}(e)$. This proves the first case. The second case is that $e$ and $e'$ are not incident; see Figure~\ref{the case of e and e' are not incident}. Since $\mathrm{star}(e)$ is a clique, the vertices $v$ and $w$ are adjacent to both $v'$ and $w'$. By Lemma~\ref{number of hyperbolic in a triangle} \ref{triangle in star(e) contains at most one elliptic edge}, at least one of the edges $(v,v')$ and $(w,v')$ is hyperbolic, say $e''=(w,v')$ is hyperbolic. Now, we have $e$, $e'$, and $e''$ are hyperbolic, and $e''$ is incident to both $e$ and $e'$. Then it follows from the first case that $\mathrm{star}(e')=\mathrm{star}(e'')=\mathrm{star}(e)$. This proves the second case. 

\begin{figure}[H]
\begin{center}
\begin{tikzpicture}[scale=0.7]

\draw [thick,red] (0,2)--(0,-0.5);
\draw [thick,red] (0,-0.5)--(-2,-2);
\draw [thick] (0,-0.5)--(2,-2);
\draw [thick] (-2,-2)--(0,2);
\draw [thick,red] (0,2)--(2,-2);
\draw [thick,red] (-2,-2)--(2,-2);

\draw [fill] (0,2) circle (3pt);
\draw [fill] (0,-0.5) circle (3pt);
\draw [fill] (-2,-2) circle (3pt);
\draw [fill] (2,-2) circle (3pt);

\node [above] at (0,2) {\small $v$};
\node [below] at (0,-0.5) {\small $w$};
\node [left] at (-2,-2) {\small $v'$};
\node [right] at (2,-2) {\small $w'$};

\node [right] at (-0.1,0.5) {\small $e$};
\node [below] at (0,-2) {\small $e'$};
\node [below] at (-0.7,-0.85) {\small $e''$};
\end{tikzpicture}
\end{center}
\caption{The red edges are hyperbolic.}
\label{the case of e and e' are not incident}
\end{figure}

\item Recall that $\mathrm{star}(e')=\mathrm{star}(e)$ is a clique. Suppose that there is a shortest path $p=(v',v'_{1},\cdots,v'_{k},w')$ from $v'$ to $w'$, where $v'_{i}$ are vertices in $\Gamma\setminus\mathrm{star}(e)$ for $1\leq i\leq k$. Note that none of the vertices $v'_{2},\cdots,v'_{k-1}$ is adjacent to either $v'$ or $w'$. Otherwise, the path $p$ would not be the shortest. Also, the vertices $v'_{1}$ and $v'_{k}$ cannot be adjacent to $w'$ and $v'$, respectively. Otherwise, they would belong to $\mathrm{star}(e')=\mathrm{star}(e)$. Let $u'\in\mathrm{star}(e)$ be a vertex. Then at least one of the edges $(v',u')$ and $(w',u')$ is hyperbolic by Lemma~\ref{number of hyperbolic in a triangle} \ref{triangle in star(e) contains at most one elliptic edge}, say $(v',u')$. If a vertex $v'_{i}$ is adjacent to $u'$ for some $i\in\lbrace2,\cdots,k-1\rbrace$, then $(v',v'_{1},\cdots,v'_{i},u')$ is a path in $\Gamma\setminus\mathrm{star}(e)$ between the two end vertices of the hyperbolic edge $(v',u')$; see Figure~\ref{a path connecting the endpoints of a hyperbolic edge}. Thus, without loss of generality, we assume that none of the vertices $v'_{2},\cdots,v'_{k-1}$ is adjacent to vertices in $\mathrm{star}(e)$. Since the flag complex on $\Gamma$ is simply connected, the cycle $(v',v'_{1},\cdots,v'_{k},w',v')$ is the boundary $\partial D$ of a triangulated disk $D$ in $\left(\Gamma\setminus\mathrm{star}(e)\right)\cup\lbrace e'\rbrace$. 

\begin{figure}[H]
\begin{center}
\begin{tikzpicture}[scale=0.6]
\draw [thick] (0,0)--(2,0);
\node [below] at (1,0) {$e'$};

\draw [thick, orange] (0,0)--(-1,1)--(0,3)--(2,4)--(1,1);
\draw [thick] (2,4)--(4,3)--(5,2)--(4,1)--(2,0);

\draw [thick, red] (1,1)--(0,0);
\draw [thick] (1,1)--(2,0);
\draw [fill] (1,1) circle (3pt);
\node [right] at (1,1.1) {\small $u'$};

\draw [fill] (0,0) circle (3pt);
\node [below] at (0,0) {\small $v'$};
\draw [fill] (-1,1) circle (3pt);
\node [left] at (-1,1) {\small $v'_{1}$};
\draw [fill] (0,3) circle (3pt);
\draw [fill] (2,4) circle (3pt);
\node [above] at (2,4) {\small $v'_{i}$};
\draw [fill] (4,3) circle (3pt);
\draw [fill] (5,2) circle (3pt);
\draw [fill] (4,1) circle (3pt);
\node [below right] at (4,1) {\small $v'_{k}$};
\draw [fill] (2,0) circle (3pt);
\node [below] at (2,0) {\small $w'$};
\end{tikzpicture}
\end{center}
\caption{The orange path is in $\Gamma\setminus\mathrm{star}(e)$, connecting two endpoints of a hyperbolic edge (red) in $\mathrm{star}(e')$.}
\label{a path connecting the endpoints of a hyperbolic edge}
\end{figure}

We claim that such a disk $D$ does not exist. Denote the interior of $D$ by $\interior(D)$. Since $D$ is a triangulated disk and the vertices on $\partial D$ are not adjacent to the vertices of $\mathrm{star}(e)$, except for the vertices $v'_1$ and $v'_k$, there is a vertex $u$ in $\interior(D)$ that is adjacent to both $v'$ and $w'$. This implies that $u\in\mathrm{star}(e)$. Thus, there are some vertices, and hence, edges of $\mathrm{star}(e)$, lying in $\interior(D)$. Next, we show that this cannot happen.

Since $\mathrm{star}(e)$ is a clique, there are at most two vertices from $\mathrm{star}(e)$ lying in $\interior(D)$. Otherwise, the disk $D$ would contain a $K_{5}$, which is impossible since $K_{5}$ is not planar. Let $w$ be the only vertex in $\mathrm{star}(e)$ lying in $\interior(D)$. Note that $w$, $v'$, and $w'$ form a triangle since $\mathrm{star}(e)$ is a clique. Recall that the vertex $w$ is not adjacent to any vertex on $\partial D$ other than $v'$ and $w'$. Since $D$ is a triangulated disk, there are vertices $v_1$ and $v_2$ in $\interior(D)$ such that $v_1$, $w$, $v'$ and $v_2$, $w$, $w'$ form triangles; see Figure~\ref{the interior of D cannot contain only one vertex in star(e)}. By Lemma~\ref{number of hyperbolic in a triangle} \ref{triangle in star(e) contains at most one elliptic edge}, either $(w,v')$ or $(w,w')$ is hyperbolic. Say, the edge $(w,v')$ is hyperbolic. Then we have $v_1\in\mathrm{star}((w,v'))=\mathrm{star}(e)$ by \ref{star(e')=star(e) if e' is hyperbolic}, which contradicts the uniqueness of $w$.

\begin{figure}[H]
\begin{tikzpicture}[scale=0.635]
\draw [thick, red] (4,0)--(8,0);

\draw [thick, red] (6,4)--(4,0);
\draw [thick] (6,4)--(8,0);
\node [above] at (6,4) {$w$};
\node [below] at (6,0) {$e'$};
\node [below] at (4,0) {$v'$};
\node [below] at (8,0) {$w'$};

\draw [thick] (4,0)--(2,1)--(1,3)--(2,5)--(4,6)--(8,6)--(10,5)--(11,3)--(10,1)--(8,0);

\draw [fill] (2,1) circle (3pt);
\draw [fill] (1,3) circle (3pt);
\draw [fill] (2,5) circle (3pt);
\draw [fill] (4,6) circle (3pt);
\draw [fill] (6,6) circle (3pt);
\draw [fill] (8,6) circle (3pt);
\draw [fill] (10,5) circle (3pt);
\draw [fill] (11,3) circle (3pt);
\draw [fill] (10,1) circle (3pt);

\node [left] at (2,1) {$v'_1$};
\node [left] at (1,3) {$v'_2$};
\node [above] at (6,6) {$v'_i$};
\node [right] at (11,3) {$v'_{k-1}$};
\node [right] at (10,1) {$v'_k$};

\draw [thick, dashed] (3,2)--(8,0);

\draw [thick] (4,0)--(3,2)--(6,4);
\draw [fill, green] (3,2) circle (3pt);
\node [left] at (3,2) {$v_1$};
\draw [thick] (8,0)--(9,2)--(6,4);
\draw [fill] (9,2) circle (3pt);
\node [right] at (9,2) {$v_2$};

\draw [fill, green] (4,0) circle (3pt);
\draw [fill, green] (8,0) circle (3pt);
\draw [fill, green] (6,4) circle (3pt);

\end{tikzpicture}
\caption{This picture illustrates a triangulated disk $D$ containing a vertex $w$ in $\mathrm{star}(e)$ must contain another vertex $v_1$ in $\mathrm{star}(e)$. The red edges are hyperbolic, and the green vertices are in $\mathrm{star}(e)$.}
\label{the interior of D cannot contain only one vertex in star(e)}
\end{figure}

Now, let $v$ and $w$ be two vertices of $\mathrm{star}(e)$ lying in $\interior(D)$. Again, the vertices $v$ and $w$ together with $v'$ and $w'$ form a $K_{4}$ since $\mathrm{star}(e)$ is a clique. The same argument in the previous paragraph shows that there is a vertex $u_{1}$ in $\interior(D)$, different from $v$ and $w$, belonging to $\mathrm{star}(e)$; see Figure~\ref{a K4 of the star(e') is in the interior of D}. Therefore, the vertex $u_{1}$ together with the existing $K_{4}$ form a $K_{5}$ in $D$, which is absurd. Thus, we have showed that $\interior(D)$ contains no vertices and edges from $\mathrm{star}(e)$. Hence, the disk $D$ does not exist. This proves the claim.

Since the disk $D$ does not exist, its boundary $\partial D$ does not exist either. Hence, there is no path $p=(v',v'_{1},\cdots,v'_{k},w')$ in $\Gamma\setminus\mathrm{star}(e)$ from $v'$ to $w'$, where $e'=(v',w')\in\mathrm{star}(e)$ is an hyperbolic edge.

\begin{figure}[H]
\begin{tikzpicture}[scale=0.635]
\draw [thick, red] (4,0)--(8,0);
\draw [thick] (8,0)--(6,4);
\draw [thick, red] (4,0)--(6,4);
\draw [thick] (6,1.5)--(6,4);
\node [below] at (6,1.5) {$v$};
\node [above] at (6,4) {$w$};
\draw [thick] (6,1.5)--(4,0);
\draw [thick] (6,1.5)--(8,0);
\node [below] at (6,0) {$e'$};
\node [below] at (4,0) {$v'$};
\node [below] at (8,0) {$w'$};

\draw [thick] (4,0)--(2,1)--(1,3)--(2,5)--(4,6)--(8,6)--(10,5)--(11,3)--(10,1)--(8,0);

\draw [fill] (2,1) circle (3pt);
\draw [fill] (1,3) circle (3pt);
\draw [fill] (2,5) circle (3pt);
\draw [fill] (4,6) circle (3pt);
\draw [fill] (6,6) circle (3pt);
\draw [fill] (8,6) circle (3pt);
\draw [fill] (10,5) circle (3pt);
\draw [fill] (11,3) circle (3pt);
\draw [fill] (10,1) circle (3pt);

\node [left] at (2,1) {$v'_1$};
\node [left] at (1,3) {$v'_2$};
\node [above] at (6,6) {$v'_i$};
\node [right] at (11,3) {$v'_{k-1}$};
\node [right] at (10,1) {$v'_k$};

\draw [thick, dashed] (3,2)--(6,1.5);
\draw [thick, dashed] (3,2)--(8,0);

\draw [thick] (4,0)--(3,2)--(6,4);
\draw [fill, green] (3,2) circle (3pt);
\node [left] at (3,2) {$u_1$};

\draw [fill, green] (4,0) circle (3pt);
\draw [fill, green] (8,0) circle (3pt);
\draw [fill, green] (6,1.5) circle (3pt);
\draw [fill, green] (6,4) circle (3pt);

\end{tikzpicture}
\caption{This picture illustrates a triangulated disk $D$ containing a $K_4$ in $\mathrm{star}(e)$ must contain a $K_5$, which is impossible. The red edges are hyperbolic, and the green vertices are in $\mathrm{star}(e)$.}
\label{a K4 of the star(e') is in the interior of D}
\end{figure}

\end{enumerate}
\end{proof}

\begin{definition}
Let $\Gamma'$ be a connected subgraph of a graph $\Gamma$. We call $\Gamma'$ an \emph{elliptic component} of $\Gamma$ if $E(\Gamma')$ contains only elliptic edges. 
\end{definition}

\begin{lemma}\label{star(e) contains a separating clique}
Assume that $\Gamma$ and $BB_{\Gamma}$ satisfy the assumptions in Lemma~\ref{the star of a hyperbolic edge is clique}, and let $e\in E(\Gamma)$ be hyperbolic. Suppose that $\Gamma$ has no cut-vertices and $\Gamma\setminus\mathrm{star}(e)$ is nonempty. Then $\Gamma$ contains a separating clique. 
\end{lemma}

\begin{proof}
If all the edges of $\mathrm{star}(e)$ are hyperbolic, then it follows from Lemma~\ref{observations of star(e) for hyperbolic edge e} \ref{no paths in Gamma-star(e) connect the end points of a hyperbolic edge in star(e)} that there are no paths in $\Gamma\setminus\mathrm{star}(e)$ between any two distinct vertices in $\mathrm{star}(e)$. This implies that the graph $\Gamma$ is disconnected, which is a contradiction. Thus, the subgraph $\mathrm{star}(e)$ must contain an elliptic edge. 

Suppose that $\mathrm{star}(e)$ contains only one elliptic component $\Gamma'$. Observe that $\Gamma'$ is a clique by \ref{triangle in star(e) contains at most one elliptic edge} and \ref{having two elliptic edges implies three elliptic edges} of Lemma~\ref{number of hyperbolic in a triangle}. Let $u\in V(\Gamma\setminus\mathrm{star}(e))$. Since $\Gamma$ is connected and has no cut-vertices, there is a path in $\Gamma\setminus\mathrm{star}(e)$ containing $u$ that connects two vertices $v'$ and $w'$ in $\mathrm{star}(e)$. By Lemma~\ref{observations of star(e) for hyperbolic edge e} \ref{no paths in Gamma-star(e) connect the end points of a hyperbolic edge in star(e)}, the vertices $v'$ and $w'$ belong to $V(\Gamma')$. Thus, the graph $\Gamma\setminus\Gamma'$ is disconnected.

Now suppose that $\mathrm{star}(e)$ contains two elliptic components $\Gamma_{1}$ and $\Gamma_{2}$. Recall that $\Gamma_{1}$ and $\Gamma_{2}$ are cliques. If $V(\Gamma_{1})\cap V(\Gamma_{2})$ is nonempty, then the subgraph $\Gamma''$ induced by $V(\Gamma_{1})\cup V(\Gamma_{2})$ is an elliptic component in $\mathrm{star}(e)$. Therefore, the graph $\Gamma''$ is a separating clique of $\Gamma$ by the previous paragraph. So we assume that $V(\Gamma_{1})\cap V(\Gamma_{2})$ is empty. Note that all the edges between $\Gamma_{1}$ and $\Gamma_{2}$ must be hyperbolic. Let $v_{1}\in V(\Gamma_{1})$, $v_{2}\in V(\Gamma_{2})$, and $u\in V(\Gamma\setminus\mathrm{star}(e))$. Since $(v_{1},v_{2})$ is hyperbolic, by Lemma~\ref{observations of star(e) for hyperbolic edge e} \ref{no paths in Gamma-star(e) connect the end points of a hyperbolic edge in star(e)}, there does not exist a path between $v_{1}$ and $v_{2}$ in $\Gamma\setminus\mathrm{star}(e)$ that contains $u$. That is, any path in $\Gamma\setminus\mathrm{star}(e)$ containing $u$ must start and end at the same elliptic component. Thus, either $\Gamma_{1}$ or $\Gamma_{2}$ is a separating clique of $\Gamma$.

If $\mathrm{star}(e)$ contains more than two disjoint elliptic components, then by the previous paragraph, each of these elliptic components is a separating clique of $\Gamma$. This completes the proof.
\end{proof}

To make a comparison to our main result, we state Groves and Hull's result for RAAGs:

\begin{thm}\textnormal{(\cite[Theorem A]{grovesandhullabeliansplittingsofraags})}\label{abelian splittings of raags}
Let $\Gamma$ be a finite simplicial graph. The right-angled Artin group $A_{\Gamma}$ splits over an abelian subgroup if and only if one of the following occurs:
\begin{enumerate}
\item $\Gamma$ is disconnected;
\item $\Gamma$ is a complete graph;
\item $\Gamma$ contains a separating clique.
\end{enumerate}
\end{thm}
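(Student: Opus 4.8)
The plan is to prove both implications, with the reverse (``only if'') direction being the substantive one. For the forward (``if'') direction I would exhibit a concrete splitting in each case. If $\Gamma$ is disconnected, $A_{\Gamma}$ is a free product of the right-angled Artin groups on its components, hence splits over the trivial abelian subgroup. If $\Gamma$ is complete on $n$ vertices then $A_{\Gamma}\cong\mathbb{Z}^{n}$, which I would realize as the HNN-extension $\mathbb{Z}^{n-1}\ast_{\mathbb{Z}^{n-1}}$ with trivial stable-letter conjugation, a non-trivial splitting over the abelian subgroup $\mathbb{Z}^{n-1}$. Finally, if $K$ is a separating clique, so that $\Gamma\setminus K$ decomposes into two non-empty full subgraphs $\Gamma_{1},\Gamma_{2}$ with no edges between them, then every edge of $\Gamma$ lies in $\Gamma_{1}\cup K$ or in $\Gamma_{2}\cup K$, and I obtain the amalgam $A_{\Gamma}=A_{\Gamma_{1}\cup K}\ast_{A_{K}}A_{\Gamma_{2}\cup K}$; this is non-trivial since each factor strictly contains $A_{K}$, and $A_{K}$ is free abelian because $K$ is a clique.

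For the converse I would pass to the Bass--Serre tree $T$ of a non-trivial splitting over an abelian subgroup $C$, so that $A_{\Gamma}$ acts on $T$ without a global fixed point and with abelian edge stabilizers, and then split into two cases according to whether some vertex generator acts hyperbolically. Suppose first that a generator $v$ is hyperbolic. Every neighbour $w$ of $v$ commutes with $v$, so by Lemma \ref{commutating element preserves axis} each such $w$ fixes $\mathrm{Axis}(v)$ pointwise; hence $A_{\mathrm{lk}(v)}$ fixes an edge of the axis and so is contained in an abelian edge stabilizer. Since a right-angled Artin group is abelian precisely when its defining graph is complete (see \cite{charneyanintroductiontorightangledartingroups}), $\mathrm{lk}(v)$ must be a clique. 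If $v$ is adjacent to every other vertex this forces $\Gamma$ to be complete (case 2); otherwise $\mathrm{lk}(v)$ separates $v$ from $\Gamma\setminus\mathrm{st}(v)$, a separating clique (case 3); and if $\mathrm{lk}(v)=\varnothing$ then $\Gamma$ is disconnected (case 1).

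In the remaining case every generator acts elliptically, with $\mathrm{Fix}(v)$ a subtree of $T$. As the action has no global fixed point we have $\bigcap_{v}\mathrm{Fix}(v)=\varnothing$, so the Helly property for subtrees of a tree produces two generators $v,w$ with $\mathrm{Fix}(v)\cap\mathrm{Fix}(w)=\varnothing$; by Lemma \ref{product of commutating elliptic elements is also elliptic} such $v,w$ are non-adjacent. Choosing an edge $e$ in the interior of the bridge joining $\mathrm{Fix}(v)$ and $\mathrm{Fix}(w)$, I would set $S=\{u\in V(\Gamma): u\ \text{fixes}\ e\}$. Since $S$ lies in the abelian stabilizer of $e$ its elements pairwise commute, so $S$ is a clique; and deleting $e$ splits $T$ into two half-trees, partitioning the generators according to which side their connected fixed subtree lies on, so that $V(\Gamma)=\Gamma_{v}\sqcup S\sqcup\Gamma_{w}$ with $v\in\Gamma_{v}$ and $w\in\Gamma_{w}$. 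If vertices $u\in\Gamma_{v}$ and $u'\in\Gamma_{w}$ were adjacent, they would commute and, being elliptic, have intersecting fixed sets by Lemma \ref{product of commutating elliptic elements is also elliptic}, contradicting that their fixed subtrees lie on opposite sides of $e$. Hence $S$ separates $\Gamma$, giving case 3 (or case 1 if $S=\varnothing$).

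The main obstacle I anticipate lies in the all-elliptic case: justifying the trichotomy of fixed subtrees relative to $e$ (each connected fixed subtree lies strictly on one side of $e$ or contains $e$), and checking that the ``boundary'' generators are exactly those landing in the abelian stabilizer $S$. Pinning down the Helly step and the non-emptiness of both sides ($v\in\Gamma_{v}$, $w\in\Gamma_{w}$) is the delicate part, since it is precisely this bookkeeping that simultaneously yields a clique and a genuine separation.
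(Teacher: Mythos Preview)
The paper does not give its own proof of this statement: Theorem~\ref{abelian splittings of raags} is quoted from \cite{grovesandhullabeliansplittingsofraags} and \cite{claywhendoesarightangledartingroupsplitoverz} purely for comparison with the Bestvina--Brady result, so there is nothing in the paper to compare your argument against directly. That said, your argument is the Groves--Hull argument, and it is correct. The bookkeeping you flag as an obstacle in the all-elliptic case is not a real difficulty: the bridge $L$ between $\mathrm{Fix}(v)$ and $\mathrm{Fix}(w)$ contains at least one edge $e$ whose interior meets neither fixed set (if an endpoint of $e$ lay in both, the two subtrees would intersect), so $v\in\Gamma_{v}$ and $w\in\Gamma_{w}$; every other generator $u$ has $\mathrm{Fix}(u)$ a non-empty subtree of $T$, hence lies entirely in one half-tree or contains $e$, giving exactly the trichotomy $V(\Gamma)=\Gamma_{v}\sqcup S\sqcup\Gamma_{w}$ you want.

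For context, the paper's proof of the analogous Bestvina--Brady statement (Theorem~\ref{abelian splittings of BBG}) follows the same two-case strategy, translated from vertex generators to edge generators. The hyperbolic case there is more elaborate (Lemma~\ref{hyperbolic edge gives a separating clique}) because $\mathrm{star}(e)$ need not itself separate $\Gamma$ and one must locate a separating clique inside it; in the RAAG setting your observation that $\mathrm{lk}(v)$ already separates $v$ from $\Gamma\setminus\mathrm{st}(v)$ bypasses all of that. In the elliptic case the paper packages the same half-tree partition into a continuous map $F\colon\Gamma\to T$ and reads off the separating clique as $F^{-1}(p)$ for a point $p$ interior to an edge of $L$; this is exactly your set $S$, just phrased topologically.
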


Now we prove our main theorem.

\begin{thm}\label{abelian splittings of BBG}
Let $\Gamma$ be a finite simplicial connected graph such that the flag complex on $\Gamma$ is simply connected. The Bestvina--Brady group $BB_{\Gamma}$ splits over an abelian subgroup if and only if $\Gamma$ satisfies one of the following:
\begin{enumerate}[ref=(\arabic*)]
\item \label{has a cut-vertex} $\Gamma$ has a cut-vertex;
\item \label{is a complete graph} $\Gamma$ is a complete graph;
\item \label{has a separating clique} $\Gamma$ has a separating clique $K_{n}$, $n\geq2$ .
\end{enumerate}
\end{thm}

\begin{proof}
One direction is obvious. That is, if one of \ref{has a cut-vertex}, \ref{is a complete graph}, or \ref{has a separating clique} occurs, then $BB_{\Gamma}$ splits over an abelian subgroup.

Suppose that $\Gamma$ is not a complete graph and has no cut-vertices, and let $BB_{\Gamma}$ act non-trivially on a tree $T$ with abelian edge stabilizers. If $\Gamma$ contains a hyperbolic edge, then it contains a separating clique by Lemma~\ref{star(e) contains a separating clique}. 

Now suppose that all the edges of $\Gamma$ act elliptically on $T$. We define a map $F:\Gamma\rightarrow T$ as follows. Since the action of $BB_{\Gamma}$ on $T$ has no global fixed points, the intersection ${\displaystyle\cap_{e\in E(\Gamma)}\mathrm{Fix}(e)}$ is empty. Thus, there exist $e_{\alpha},e_{\beta}\in E(\Gamma)$ such that $\mathrm{Fix}(e_{\alpha})$ and $\mathrm{Fix}(e_{\beta})$ are disjoint. Let $L$ be the geodesic between $\mathrm{Fix}(e_{\alpha})$ and $\mathrm{Fix}(e_{\beta})$ and pick a point $p$ in the interior of an edge of $L$. Choose $y_{\alpha}\in\mathrm{Fix}(e_{\alpha})$, $y_{\beta}\in\mathrm{Fix}(e_{\beta})$ and define $F(e_{\alpha})=y_{\alpha}$, $F(e_{\beta})=y_{\beta}$, and $F(e_{s})=p$ whenever $p\in\mathrm{Fix}(e_{s})$ for $e_{s}\in E(\Gamma)$. For other $e_{t}\in E(\Gamma)$, choose $y_{t}\in\mathrm{Fix}(e_{t})$ and define $F(e_{t})=y_{t}$. Next, we describe how the map $F$ sends vertices of $\Gamma$ to $T$. For each $v\in V(\Gamma)$, denote $e_{1},\cdots,e_{k}$ to be the edges incident to $v$. Let $L_{ij}$ be the geodesic between $F(e_{i})$ and $F(e_{j})$ for commuting $e_{i}$ and $e_{j}$. Define
$$
F(v)=\bigcup_{i,j}L_{ij}.
$$
Since $\Gamma$ has no cut-vertices and the flag complex on $\Gamma$ is simply connected, each edge of $\Gamma$ belongs to a triangle, and $F(v)$ and $\bigcup_{v\in V(\Gamma)}F(v)$ are connected. When $e_{i}$ and $e_{j}$ commute, the intersection $\mathrm{Fix}(e_{i})\cap\mathrm{Fix}(e_{j})$ is nonempty by Lemma~\ref{product of commutating elliptic elements is also elliptic}. Thus, the geodesic $L_{ij}$ lies entirely in $\mathrm{Fix}(e_{i})\cup\mathrm{Fix}(e_{j})$. Notice that $L_{ij}$ can be degenerate. Hence, we have $F(\Gamma)\subseteq\bigcup_{e\in E(\Gamma)}\mathrm{Fix}(e)$. Moreover, the map $F$ sends connected sets to connected sets. Indeed, let $e=(v,w)$ be an edge of $\Gamma$. Then $F(v)\cap F(w)$ is nonempty since it contains the point $\mathrm{F}(e)$. Therefore, the set $F(v)\cup F(w)$ is connected whenever $v$ and $w$ are adjacent vertices.

Since the set $F^{-1}(p)$ contains all the edges in $\Gamma$ that fix $p$, it fixes the edge containing $p$. Then the group generated by $F^{-1}(p)$ also fixes the edge containing $p$, and therefore, it is an abelian group by the assumption. Hence, the set $F^{-1}(p)$ forms a clique $K$ in $\Gamma$. Since $p$ separates $T$ and the map $F$ sends connected sets to connected sets, the clique $K$ separates~$\Gamma$.
\end{proof}

\begin{remark}\label{remark on distinguishing cut-vertices and cliques}
The cut-vertices and separating cliques of sizes greater than two are distinguished in Theorem~\ref{abelian splittings of BBG}, but not in Theorem~\ref{abelian splittings of raags}. This is because when $\Gamma$ has a cut-vertex, the group $A_{\Gamma}$ splits as an amalgamated product over $\mathbb{Z}$ while $BB_{\Gamma}$ splits as a free product. 
\end{remark}

Corollary~\ref{RAAGs split iff BBGs split} is an immediate consequence of Theorem~\ref{abelian splittings of raags} and Theorem~\ref{abelian splittings of BBG}.

\begin{cor}\label{RAAGs split iff BBGs split}
Let $\Gamma$ be a finite simplicial connected graph whose associated flag complex is simply connected. Suppose that $\Gamma$ has no cut-vertices. Then $A_{\Gamma}$ splits over an abelian subgroup if and only if $BB_{\Gamma}$ splits over an abelian subgroup.
\end{cor}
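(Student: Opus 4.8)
The plan is to prove Corollary \ref{RAAGs split iff BBGs split} by combining the two structural characterizations directly, checking that the three conditions in Theorem \ref{abelian splittings of BBG} on $\Gamma$ are, under the standing hypotheses, equivalent to the three conditions in Theorem \ref{abelian splittings of raags}. Since $\Gamma$ is assumed connected, condition (1) of Theorem \ref{abelian splittings of raags} ($\Gamma$ disconnected) never occurs, so the right-angled Artin side reduces to: $A_\Gamma$ splits over an abelian subgroup iff $\Gamma$ is a complete graph or $\Gamma$ contains a separating clique. On the Bestvina--Brady side, Theorem \ref{abelian splittings of BBG} gives: $H_\Gamma$ splits over an abelian subgroup iff $\Gamma$ has a cut-vertex, or $\Gamma$ is complete, or $\Gamma$ has a separating clique $K_n$ with $n\ge 2$. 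So the task is to reconcile these two lists.

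First I would handle the easy implications. If $H_\Gamma$ splits over an abelian subgroup, then either $\Gamma$ is complete, or $\Gamma$ has a separating clique $K_n$ with $n \geq 2$, or $\Gamma$ has a cut-vertex; in the first two cases condition (2) or (3) of Theorem \ref{abelian splittings of raags} holds directly, so $A_\Gamma$ splits. In the cut-vertex case, a cut-vertex $v$ is by definition a separating clique $K_1$ of $\Gamma$, so again condition (3) of Theorem \ref{abelian splittings of raags} holds and $A_\Gamma$ splits over an abelian subgroup (indeed over $\mathbb{Z}$, as in Clay's result). Conversely, if $A_\Gamma$ splits over an abelian subgroup then, using connectedness of $\Gamma$, either $\Gamma$ is complete — giving condition (2) of Theorem \ref{abelian splittings of BBG} — or $\Gamma$ has a separating clique $K_m$. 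If $m=1$ this separating clique is a cut-vertex, giving condition (1) of Theorem \ref{abelian splittings of BBG}; if $m \geq 2$ it gives condition (3). In every case $H_\Gamma$ splits over an abelian subgroup.

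The point I would want to state carefully is the bookkeeping that a cut-vertex is exactly a separating clique of size one, so that the apparently different partitioning of cases between the two theorems collapses once one observes that $\Gamma$ is connected. There is no real obstacle here: the corollary is a purely formal consequence of the two preceding theorems once one notes (a) connectedness kills the "disconnected" case of Theorem \ref{abelian splittings of raags}, and (b) the union of "cut-vertex" and "separating clique $K_n$, $n\ge2$" in Theorem \ref{abelian splittings of BBG} is precisely "separating clique" (of any size, including $K_1$) in Theorem \ref{abelian splittings of raags}. I would present the proof as a short two-way implication, invoking Theorem \ref{abelian splittings of BBG} for one direction and Theorem \ref{abelian splittings of raags} for the other, and flagging Remark \ref{remark on distinguishing cut-vertices and cliques} for why the two lists look superficially different even though the underlying graph conditions coincide.
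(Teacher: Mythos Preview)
Your proposal is correct and follows exactly the paper's approach: the paper states this corollary as ``an immediate consequence from Theorem~\ref{abelian splittings of BBG} and Theorem~\ref{abelian splittings of raags}'' with no further proof, and your argument simply spells out that immediate consequence by matching up the graph-theoretic conditions in the two theorems.
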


If we see a separating clique of a particular size in the defining graph, then the associated RAAG and Bestvina--Brady group split as we expect.

\begin{cor}\label{Gamma contains a separating n-clique, then RAAG and BBG split over Z^n and Z^n-1, respectively}
Let $\Gamma$ be a finite simplicial connected graph whose associated flag complex is simply connected. If $\Gamma$ contains a separating clique $K_{n}$, $n\geq2$, then $A_{\Gamma}$ splits over $\mathbb{Z}^{n}$ and $BB_{\Gamma}$ splits over $\mathbb{Z}^{n-1}$.
\end{cor}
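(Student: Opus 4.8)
The plan is to build the splitting directly from a separating clique, mimicking the amalgamated-product description given in the introduction but keeping track of the isomorphism type of the edge group. Suppose $\Gamma$ has a separating clique $K = K_n$ with $n \geq 2$, and write $\Gamma \setminus K$ as a disjoint union of non-empty induced subgraphs, grouping the components so that $\Gamma = \Gamma_1 \cup \Gamma_2$ with $\Gamma_1 \cap \Gamma_2 = K$ (here $\Gamma_i$ denotes the full subgraph on $K$ together with the chosen components). Since each $\Gamma_i$ is connected and its flag complex inherits simple-connectivity from that of $\Gamma$ (any cycle in $\Gamma_i$ of length $\geq 4$ would be an induced cycle of $\Gamma$, which is excluded because the flag complex on $\Gamma$ is simply-connected), the Dicks--Leary presentation applies to each $H_{\Gamma_i}$, and the generators-and-triangle-relators presentations of $H_{\Gamma_1}$, $H_{\Gamma_2}$, $H_K$ assemble to give the presentation of $H_\Gamma$: every directed edge of $\Gamma$ lies in $\Gamma_1$ or $\Gamma_2$, and every directed triangle of $\Gamma$ lies entirely in $\Gamma_1$ or entirely in $\Gamma_2$ because $K$ is a clique and $\Gamma_1, \Gamma_2$ share all of $K$. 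Hence $H_\Gamma \cong H_{\Gamma_1} \ast_{H_K} H_{\Gamma_2}$, and this splitting is non-trivial since each $\Gamma_i$ properly contains $K$ (each contains at least one vertex of the corresponding component outside $K$), so $H_{\Gamma_i} \neq H_K$.

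Next I would identify the edge group. By Example \ref{example of abelian group for D-I presentation}, since $K = K_n$ is complete on $n$ vertices, $H_K \cong \mathbb{Z}^{n-1}$. Therefore $H_\Gamma$ splits over $\mathbb{Z}^{n-1}$, which is the claim for the Bestvina--Brady group. For the right-angled Artin group, I would invoke the analogous fact recalled in the introduction: if $K$ is a separating clique with $\Gamma = \Gamma_1 \cup_K \Gamma_2$, then $A_\Gamma \cong A_{\Gamma_1} \ast_{A_K} A_{\Gamma_2}$, which is immediate from the standard graph-of-groups decomposition of right-angled Artin groups along full subgraphs (the defining presentation of $A_\Gamma$ is the pushout of those of $A_{\Gamma_1}$ and $A_{\Gamma_2}$ over $A_K$). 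Since $A_K \cong \mathbb{Z}^n$ for a clique on $n$ vertices, $A_\Gamma$ splits over $\mathbb{Z}^n$; non-triviality again follows because $\Gamma_i \supsetneq K$ forces $A_{\Gamma_i} \neq A_K$.

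The only genuinely delicate point is the verification that the presentations glue correctly, i.e.\ that $H_\Gamma \cong H_{\Gamma_1} \ast_{H_K} H_{\Gamma_2}$ rather than merely admitting a surjection from the amalgam. This requires checking two things: first, that the inclusion $K \hookrightarrow \Gamma_i$ induces an injection $H_K \hookrightarrow H_{\Gamma_i}$ (so that the amalgam is formed over a genuinely common subgroup), and second, that no relator of $H_\Gamma$ is lost. The first is clear because $H_K$ is a retract-friendly free abelian subgroup and more directly because the Dicks--Leary generators of $H_K$ are a subset of those of $H_{\Gamma_i}$ with no new relations imposed among them; the second is exactly the observation above that each directed triangle of $\Gamma$ is contained in one of $\Gamma_1, \Gamma_2$, which holds because any triangle using a vertex of a component of $\Gamma \setminus K$ in $\Gamma_1$ cannot simultaneously use a vertex of a component in $\Gamma_2$ (they are non-adjacent), so its third vertex, if not already in that side, must lie in $K$. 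Once this bookkeeping is in place, the corollary is immediate from Example \ref{example of abelian group for D-I presentation} and the corresponding statement for right-angled Artin groups.
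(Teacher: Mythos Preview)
Your approach is exactly what the paper intends: the corollary is stated without a separate proof, as an immediate consequence of the amalgamated-product decomposition $H_\Gamma \cong H_{\Gamma_1\cup K}\ast_{H_K}H_{\Gamma_2\cup K}$ sketched in the introduction together with Example~\ref{example of abelian group for D-I presentation} (and the analogous, standard splitting $A_\Gamma \cong A_{\Gamma_1\cup K}\ast_{A_K}A_{\Gamma_2\cup K}$). One small correction to your write-up: the parenthetical claim that simple connectivity of the flag complex forbids induced cycles of length $\geq 4$ is not actually true (e.g.\ a $4$-cycle together with a coning vertex has contractible flag complex but an induced $4$-cycle), so to justify that the flag complex on each $\Gamma_i$ is simply connected you should instead appeal to van Kampen, using that the flag complexes of $\Gamma_1$ and $\Gamma_2$ meet in the contractible simplex on $K$.
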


\begin{remark}
Zaremsky \cite{zaremskycommensurabilityinvarianceforabeliansplittingsofrightangleartingroupsbraidgroupsandloopbraidgroups} pointed out that when a RAAG splits over an abelian subgroup, the size of the separating clique in the defining graph is unknown. He proved that for a non-complete finite simplicial graph $\Gamma$, if $A_{\Gamma}$ splits over $\mathbb{Z}^{n}$, then $\Gamma$ admits a separating clique of size $k$ for some $k\leq n$; see \cite[Proposition 2.3]{zaremskycommensurabilityinvarianceforabeliansplittingsofrightangleartingroupsbraidgroupsandloopbraidgroups}. Our proof does not detect the sizes of the separating cliques for splitting Bestvina--Brady groups either. However, if one can control the size of the separating clique in either case, then the other case follows.
\end{remark}

\section{JSJ-decompositions of Finitely Presented Bestvina--Brady Groups}\label{section JSJ-decomposition of bestvina--brady groups}

In this section, we describe the JSJ-decompositions for finitely presented Bestvina--Brady groups which split over abelian subgroups. For completeness, we give necessary definitions along the way. Throughout this section, the graph $\Gamma$ will always be a finite simplicial connected graph whose associated flag complex is simply connected.

In \cite{grovesandhullabeliansplittingsofraags}, the authors gave a graph of groups decomposition of $A_{\Gamma}$ over abelian subgroups such that each vertex of $\Gamma$ acts on the Bass--Serre tree elliptically. They called such a decomposition a \emph{vertex-elliptic abelian splitting}. When $A_{\Gamma}$ admits a vertex-elliptic abelian splitting, by Corollary~\ref{RAAGs split iff BBGs split} and Lemma~\ref{generators of RAAG are elliptic implies generators of BBG are elliptic}, the group $BB_{\Gamma}$ also has a graph of groups decomposition over abelian subgroups such that each directed edge of $\Gamma$ acts on the Bass--Serre tree elliptically. We call such a splitting an \emph{edge-elliptic abelian splitting}. We will describe these splittings later with more details. 

We recall some terminologies from \cite{guirardelandlevittjsjdecompositionsofgroups}. Fix a family $\mathcal{A}$ of subgroups of a group $G$ that is stable under conjugate and taking subgroups. If $G$ acts on a tree $T$ such that all the edge stabilizers are in $\mathcal{A}$, then we call $T$ an \emph{$\mathcal{A}$-tree}. An $\mathcal{A}$-tree is called \emph{universally elliptic} if its edge stabilizers fix a point in every $\mathcal{A}$-tree. A tree $T$ \emph{dominates} another tree $T'$ if every subgroup of $G$ that fixes a point in $T$ also fixes a point in $T'$.

\begin{definition}\label{definition of JSJ trees}
A \emph{JSJ-tree} of $G$ over $\mathcal{A}$ is an $\mathcal{A}$-tree $T$ such that it is universally elliptic and dominates any other universally elliptic $\mathcal{A}$-tree. The graph of groups decomposition $\mathcal{G}=T/G$ is called a \emph{JSJ-decomposition of $G$ over $\mathcal{A}$}.
\end{definition}

If we further fix another family $\mathcal{H}$ of subgroups of $G$ and require that each $H\in\mathcal{H}$ fixes a point in an $\mathcal{A}$-tree $T$, then $T$ is called an \emph{$(\mathcal{A},\mathcal{H})$-tree}. If an $(\mathcal{A},\mathcal{H})$-tree $T$ is universally elliptic and dominates any other universally elliptic $(\mathcal{A},\mathcal{H})$-tree, then $T$ is called a \emph{JSJ-tree over $\mathcal{A}$ relative to $\mathcal{H}$}, and the corresponding graph of groups decomposition $\mathcal{G}=T/G$ is called a \emph{JSJ-decomposition of $G$ over $\mathcal{A}$ relative to $\mathcal{H}$}.

\begin{definition}
An \emph{edge-elliptic abelian JSJ-decomposition} of $BB_{\Gamma}$ is a JSJ-decomposition over $\mathcal{A}$ relative to $\mathcal{H}$, where $\mathcal{A}$ is the family of abelian subgroups of $BB_{\Gamma}$ and $\mathcal{H}=\big\lbrace\langle e\rangle \ \vert \ e\in E(\Gamma)\big\rbrace$. 
\end{definition}

We now describe a graph of groups decomposition of $A_{\Gamma}$ given in \cite{grovesandhullabeliansplittingsofraags}. For our purpose, we further assume that $\Gamma$ has no cut-vertices. Suppose that $A_{\Gamma}$ acts on a tree such that each generator acts elliptically. Observe that $\Gamma$ cannot be a complete graph; otherwise, the action would be trivial. Let $K_{1},\cdots,K_{n}$ be the minimal size separating $k$-cliques of $\Gamma$, $k\geq 2$. By Theorem~\ref{abelian splittings of raags}, each of these separating $k$-cliques gives a splitting of $A_{\Gamma}$ over $A_{K_{i}}$, and the vertex groups are $A_{\Gamma_{\alpha\beta}\cup K_{i}}$, where $\Gamma_{\alpha\beta}$ is a connected component of $\Gamma\setminus K_{i}$. For each vertex group $A_{\Gamma_{\alpha\beta}\cup K_{i}}$, spot the minimal size (greater than $k$) separating cliques of $\Gamma_{\alpha\beta}\cup K_{i}$. Then each of these separating cliques gives a splitting of the vertex group $A_{\Gamma_{\alpha\beta}\cup K_{i}}$ over abelian subgroups defined by the associated RAAGs of those separating cliques. Continue this procedure, we obtain a graph of groups decomposition $\mathcal{G}_{A_{\Gamma}}$ of $A_{\Gamma}$. In fact, the decomposition $\mathcal{G}_{A_{\Gamma}}$ is a vertex-elliptic abelian splitting. We refer the reader to \cite{grovesandhullabeliansplittingsofraags} for a more detailed description and the proof of the following theorem.

\begin{thm}\textnormal{(\cite[Theorem 2.4]{grovesandhullabeliansplittingsofraags})}
The graph of groups decomposition $\mathcal{G}_{A_{\Gamma}}$ described above is a vertex-elliptic abelian JSJ-decomposition of $A_{\Gamma}$.
\end{thm}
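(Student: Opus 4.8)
The plan is to verify directly the two defining properties of a JSJ-tree from Definition \ref{definition of JSJ trees}, in the relative setting where $\mathcal{A}$ is the family of abelian subgroups and $\mathcal{H}=\lbrace\langle v\rangle \ \vert \ v\in V(\Gamma)\rbrace$: namely, that the Bass--Serre tree $T$ of $\mathcal{G}_{A_{\Gamma}}$ is a universally elliptic $(\mathcal{A},\mathcal{H})$-tree, and that $T$ dominates every universally elliptic $(\mathcal{A},\mathcal{H})$-tree.

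First I would record two preliminary observations, both right-angled Artin group analogues of facts from Section \ref{section Preliminaries}. (i) If a clique $K$ of $\Gamma$ acts on an $(\mathcal{A},\mathcal{H})$-tree $T'$, then $A_{K}$ is elliptic: the vertices of $K$ generate $A_{K}$, they pairwise commute, and each is elliptic by hypothesis, so Lemma \ref{product of commutating elliptic elements is also elliptic} together with Serre's criterion (used exactly as in the proof of Lemma \ref{H_K is elliptic for any clique K}) produces a common fixed point. (ii) Every vertex $v\in V(\Gamma)$ is elliptic in $T$: at the first stage of the construction $v$ lies in a vertex group or an edge group, hence is elliptic there, and refining a splitting preserves ellipticity, so $v$ ends up inside a terminal vertex group of $\mathcal{G}_{A_{\Gamma}}$. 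Observation (ii) says $T$ is an $(\mathcal{A},\mathcal{H})$-tree. For universal ellipticity, note that every edge group of $\mathcal{G}_{A_{\Gamma}}$ has the form $A_{K}$ for a clique $K$ of $\Gamma$ (a separating clique of an intermediate subgraph $\Gamma_{ij}\cup K_{i}$ is in particular a clique of $\Gamma$), so by (i) it fixes a point in every $(\mathcal{A},\mathcal{H})$-tree.

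The substance is the domination statement. Since the elliptic subgroups of $T$ are exactly the subgroups of conjugates of the terminal vertex groups of $\mathcal{G}_{A_{\Gamma}}$, it suffices to show each such vertex group $A_{\Delta}$ is elliptic in an arbitrary universally elliptic $(\mathcal{A},\mathcal{H})$-tree $T'$. The key combinatorial input is that the induced subgraph $\Delta$ has no separating clique at all: it has no separating clique of size $\geq 2$ because the construction would otherwise have split it further, and one checks by induction on the stages that $2$-connectedness is inherited when passing from $\Gamma_{ij}\cup K_{i}$ to a component-union at the next stage (if a vertex disconnected $\Gamma_{ij}\cup K_{i}$, tracing the pieces back through the glued clique would show that vertex already disconnects $\Gamma$, which is excluded), so $\Delta$ has no cut-vertex either. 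Now suppose $A_{\Delta}$ were not elliptic in $T'$. Restricting $T'$ to the minimal $A_{\Delta}$-invariant subtree gives a non-trivial action of $A_{\Delta}$ on a tree with abelian edge stabilizers (each a subgroup of an abelian edge stabilizer of $T'$) in which every vertex of $\Delta$ is still elliptic; in particular $A_{\Delta}$ splits over an abelian subgroup. By Theorem \ref{abelian splittings of raags}, since $\Delta$ is connected with no separating clique, $\Delta$ must be complete, so $A_{\Delta}$ is free abelian generated by the pairwise-commuting elliptic vertices of $\Delta$, and observation (i) forces $A_{\Delta}$ to be elliptic --- a contradiction. Hence $A_{\Delta}$ is elliptic in $T'$, so $T$ dominates $T'$ and is a JSJ-tree.

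I expect the main obstacle to be precisely the combinatorial bookkeeping in the domination step: one must confirm that the iterative clique-splitting procedure really terminates with vertex groups supported on subgraphs having no separating cliques (in particular that regluing $K_{i}$ at each stage introduces no new small separating cliques that the "minimal size larger than $k$" rule would skip), and that $2$-connectedness --- hence the absence of cut-vertices --- propagates through the construction. The remaining ingredients (the right-angled Artin group versions of Lemmas \ref{product of commutating elliptic elements is also elliptic} and \ref{H_K is elliptic for any clique K}, and the reduction of domination to ellipticity of the vertex groups) are routine.
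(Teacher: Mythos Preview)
The paper does not supply a proof of this statement: immediately before the theorem it says explicitly that the reader is referred to \cite{grovesandhullabeliansplittingsofraags} for the proof, so there is nothing in the present paper to compare your proposal against. Your write-up is a faithful reconstruction of the Groves--Hull argument --- verify universal ellipticity by observing that every edge group is $A_{K}$ for a clique $K$ and hence elliptic in any $(\mathcal{A},\mathcal{H})$-tree, then prove domination by showing each terminal vertex group $A_{\Delta}$ is elliptic, using Theorem~\ref{abelian splittings of raags} to rule out nontrivial vertex-elliptic abelian splittings of $A_{\Delta}$ when $\Delta$ has no separating clique. The one place to be careful, which you flag yourself, is the combinatorial claim that each terminal $\Delta$ really has no separating clique and no cut-vertex; this does hold, but the inductive step deserves to be spelled out rather than asserted.
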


For a vertex-elliptic abelian splitting $\mathcal{G}_{A_{\Gamma}}$, replace all the vertex groups and edge groups with their corresponding Bestvina--Brady groups. By Theorem~\ref{abelian splittings of BBG} and Corollary~\ref{RAAGs split iff BBGs split}, we obtain a graph of groups decomposition $\mathcal{G}_{BB_{\Gamma}}$ of $BB_{\Gamma}$. Moreover, it follows from Lemma~\ref{generators of RAAG are elliptic implies generators of BBG are elliptic} and Corollary~\ref{Gamma contains a separating n-clique, then RAAG and BBG split over Z^n and Z^n-1, respectively} that $\mathcal{G}_{BB_{\Gamma}}$ is an edge-elliptic abelian splitting. Note that from our construction, the underlying graphs of the graph of groups decompositions $\mathcal{G}_{A_{\Gamma}}$ and $\mathcal{G}_{BB_{\Gamma}}$ are the same. It was shown in \cite[Proposition 2.1]{grovesandhullabeliansplittingsofraags} that the underlying graph of a vertex-elliptic abelian splitting $\mathcal{G}_{A_{\Gamma}}$ is a tree. Thus, we have the following proposition.

\begin{prop}\label{the underlying graph of an edge-elliptic splitting is a tree}
The underlying graph of an edge-elliptic abelian splitting $\mathcal{G}_{BB_{\Gamma}}$ is a tree, where $\Gamma$ is not a complete graph.
\end{prop}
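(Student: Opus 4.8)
The plan is to deduce the statement directly from the corresponding fact for right-angled Artin groups. Recall from the construction preceding the proposition that $\mathcal{G}_{H_{\Gamma}}$ is obtained from the vertex-elliptic abelian splitting $\mathcal{G}_{A_{\Gamma}}$ of $A_{\Gamma}$ by leaving the underlying graph untouched and replacing each vertex group $A_{\Gamma_{ij}\cup K_{i}}$ and each edge group $A_{K_{i}}$ by the associated Bestvina--Brady group $H_{\Gamma_{ij}\cup K_{i}}$, respectively $H_{K_{i}}$, with the attaching monomorphisms induced by the inclusions of the defining subgraphs (these are genuine splittings by Theorem~\ref{abelian splittings of BBG} and Corollary~\ref{RAAGs split iff BBGs split}, and the edge groups are free abelian as noted in the introduction). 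Since this operation changes none of the combinatorial data of the graph of groups, the underlying graph of $\mathcal{G}_{H_{\Gamma}}$ is literally the same graph as the underlying graph of $\mathcal{G}_{A_{\Gamma}}$. Groves and Hull proved (see \cite{grovesandhullabeliansplittingsofraags}) that the underlying graph of $\mathcal{G}_{A_{\Gamma}}$ is a tree, and the proposition follows at once.

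For completeness I would also record why the underlying graph is a tree intrinsically, i.e.\ without passing through $A_{\Gamma}$. The decomposition $\mathcal{G}_{H_{\Gamma}}$ is built by a finite recursion. At the first stage, choosing a minimal-size separating clique $K$ of $\Gamma$ expresses $H_{\Gamma}$ as an amalgamated product $H_{\Gamma_{1}\cup K}\ast_{H_{K}}H_{\Gamma_{2}\cup K}$ (and, if $\Gamma\setminus K$ has more components, as a successive amalgam along the single vertex group $H_{K}$), whose underlying graph is a star, hence a tree. One then refines each vertex group $H_{\Gamma_{ij}\cup K}$ by the same procedure applied to the subgraph $\Gamma_{ij}\cup K$. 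Each refinement replaces a vertex $v$ of the current underlying tree by the underlying tree of that vertex group's decomposition, redistributing the edges previously incident to $v$ among the new vertices; a graph obtained from a tree by blowing up a vertex into a tree in this way is again a tree. Since $\Gamma$ is finite the recursion terminates after finitely many steps, so the final underlying graph is a tree.

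The substance here is bookkeeping rather than a genuine obstacle. The one point that needs care is the claim that the clique-cutting construction for $H_{\Gamma}$ really does reproduce the same underlying graph as the construction for $A_{\Gamma}$ --- equivalently, that replacing right-angled Artin vertex and edge groups by the corresponding Bestvina--Brady groups leaves the shape of the graph of groups intact --- together with the termination of the recursion; both of these are already assembled in the paragraph preceding the statement, so no new case analysis or estimates are required.
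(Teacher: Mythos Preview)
Your proposal is correct and matches the paper's approach exactly: the paper observes that by construction $\mathcal{G}_{H_{\Gamma}}$ and $\mathcal{G}_{A_{\Gamma}}$ share the same underlying graph, and then cites \cite{grovesandhullabeliansplittingsofraags} for the fact that the latter is a tree. Your additional intrinsic recursion argument is not in the paper but is a harmless (and accurate) elaboration.
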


We are ready to give a JSJ-decomposition of $BB_{\Gamma}$.

\begin{thm}\label{edge-elliptic abelian JSJ-decomposition of BBGs}
Suppose that $\Gamma$ is not complete. The graph of groups decomposition $\mathcal{G}_{BB_{\Gamma}}$ of $BB_{\Gamma}$ is an edge-elliptic abelian JSJ-decomposition.
\end{thm}

\begin{proof}
Denote $T_{A}$ and $T_{BB}$ the Bass--Serre tree of $\mathcal{G}_{A_{\Gamma}}$ and $\mathcal{G}_{BB_{\Gamma}}$, respectively. Since $BB_{\Gamma}$ is a subgroup of $A_{\Gamma}$, the graph $T_{BB}$ is a subtree of $T_{A}$, and each generator of $A_{\Gamma}$ acts on $T_{BB}$ elliptically. By Lemma~\ref{generators of RAAG are elliptic implies generators of BBG are elliptic},  each generator of $BB_{\Gamma}$ also acts on $T_{BB}$ elliptically. Thus, the decomposition $\mathcal{G}_{BB_{\Gamma}}$ is an edge-elliptic abelian splitting. We now show that $T_{BB}$ is universally elliptic and dominates every other $(\mathcal{A},\mathcal{H})$-tree.

Assume that $BB_{\Gamma}$ acts on another $(\mathcal{A},\mathcal{H})$-tree $T$ such that each generator acts elliptically. By Lemma~\ref{H_K is elliptic for any clique K}, the edge stabilizer $BB_{K}$ of $T_{BB}$ fixes a point in $T$ for every separating clique $K$ in $\Gamma$. Thus, the tree $T_{BB}$ is universally elliptic.

Let $G$ be an elliptic subgroup of $BB_{\Gamma}$. Then $G$ either fixes a vertex or an edge of $T_{BB}$. Let $BB_{\Gamma}$ act on another universally elliptic $(\mathcal{A},\mathcal{H})$-tree $T$ such that each generator acts elliptically. If $G$ fixes an edge in $T_{BB}$, then $G$ fixes a point in $T$ since $T_{BB}$ is universal elliptic. If $G$ fixes a vertex in $T_{BB}$, that is, $G$ is a vertex stabilizer of $T_{BB}$, then $G=BB_{\Gamma'}$ for some induced subgraph $\Gamma'$ of $\Gamma$ that is connected and has no separating cliques. If $\Gamma'$ is a clique, then $G$ fixes a point in $T$ by Lemma~\ref{H_K is elliptic for any clique K}. If $\Gamma'$ is not a clique and $G$ does not act elliptically on $T$, then $G$ must have an edge-elliptic abelian splitting. This against the construction of $\mathcal{G}_{BB_{\Gamma}}$. Thus, the group $G$ acts elliptically on $T$. Hence, the tree $T_{BB}$ dominates $T$. 
\end{proof}

We end this section with an example.

\begin{example}\label{JSJ example}
Let $\Gamma$ be the graph shown on the left hand side of Figure~\ref{example graph}.

\begin{figure}[H]
\begin{tikzpicture}[scale=0.6]
\draw [thick] (-1,0)--(0,2)--(1,0);
\draw [red, thick] (-1,0)--(0,-2)--(1,0)--(-1,0);
\draw [thick] (1,0)--(2,-2)--(0,-2);
\draw [thick] (-1,0)--(-2,-2)--(0,-2);

\draw [fill] (0,2) circle [radius=0.1];
\draw [fill] (-1,0) circle [radius=0.1];
\draw [fill] (1,0) circle [radius=0.1];
\draw [fill] (-2,-2) circle [radius=0.1];
\draw [fill] (0,-2) circle [radius=0.1];
\draw [fill] (2,-2) circle [radius=0.1];

\begin{scope}[shift={(4,0)}]
\draw [thick] (5.5,-2)--(7.5,-2)--(6.5,0)--(5.5,-2);
\draw [fill] (5.5,-2) circle [radius=0.1];
\draw [fill] (7.5,-2) circle [radius=0.1];
\draw [fill] (6.5,0) circle [radius=0.1];
\draw [thick] (8,-2)--(7,0)--(9,0)--(8,-2);
\draw [fill] (8,-2) circle [radius=0.1];
\draw [fill] (7,0) circle [radius=0.1];
\draw [fill] (9,0) circle [radius=0.1];
\draw [thick] (8.5,-2)--(10.5,-2)--(9.5,0)--(8.5,-2);
\draw [fill] (8.5,-2) circle [radius=0.1];
\draw [fill] (10.5,-2) circle [radius=0.1];
\draw [fill] (9.5,0) circle [radius=0.1];
\draw [thick] (7,0.5)--(9,0.5)--(8,2.5)--(7,0.5);
\draw [fill] (7,0.5) circle [radius=0.1];
\draw [fill] (9,0.5) circle [radius=0.1];
\draw [fill] (8,2.5) circle [radius=0.1];
\end{scope}
\end{tikzpicture}
\vspace*{5mm}
\caption{The figure on the left is the graph $\Gamma$. Each of the red edges is a minimal separating clique. The figure on the right illustrates the construction of the abelian splittings for $A_{\Gamma}$ and $BB_{\Gamma}$.}
\label{example graph}
\end{figure}

There are three separating $2$-cliques (the minimal size) in $\Gamma$, as shown in the left picture of Figure~\ref{example graph}. Each of these separating $2$-cliques gives an edge group $\mathbb{Z}^{2}$ in $\mathcal{G}_{A_{\Gamma}}$. After cutting along the separating $2$-cliques (the right picture in Figure~\ref{example graph}), there are four triangles and each of them determines a vertex group $\mathbb{Z}^{3}$ in $\mathcal{G}_{A_{\Gamma}}$. The left hand side of Figure~\ref{vertex-elliptic and edge-elliptic abelian JSJ-decompositions of example graph} shows a vertex-elliptic abelian JSJ-decomposition $\mathcal{G}_{A_{\Gamma}}$ of $A_{\Gamma}$.

Replace the vertex groups and edge groups of $\mathcal{G}_{A_{\Gamma}}$ by their associated Bestvina--Brady groups, that is, replace $\mathbb{Z}^{3}$ by $\mathbb{Z}^{2}$ and $\mathbb{Z}^{2}$ by $\mathbb{Z}$. Then we obtain an edge-elliptic abelian JSJ-decomposition $\mathcal{G}_{BB_{\Gamma}}$ of $BB_{\Gamma}$; see the right hand side of Figure~\ref{vertex-elliptic and edge-elliptic abelian JSJ-decompositions of example graph}.

\begin{figure}[H]
\begin{center}
\begin{tikzpicture}[scale=0.6]

\draw [thick] (0,2)--(0,0);
\draw [thick] (0,0)--(-2,-2);
\draw [thick] (0,0)--(2,-2);

\draw [fill] (0,2) circle [radius=0.1];
\draw [fill] (0,0) circle [radius=0.1];
\draw [fill] (-2,-2) circle [radius=0.1];
\draw [fill] (2,-2) circle [radius=0.1];

\node [above] at (0.1,2) {\tiny $\mathbb{Z}^{3}$};
\node [below] at (0,-0.3) {\tiny $\mathbb{Z}^{3}$};
\node [below left] at (-2,-2) {\tiny $\mathbb{Z}^{3}$};
\node [below right] at (2,-2) {\tiny $\mathbb{Z}^{3}$};

\node [right] at (0,1) {\tiny $\mathbb{Z}^{2}$};
\node [left] at (-0.9,-0.9) {\tiny $\mathbb{Z}^{2}$};
\node [right] at (1,-0.9) {\tiny $\mathbb{Z}^{2}$};

\begin{scope}[shift={(11,0)}]

\draw [thick] (0,2)--(0,0);
\draw [thick] (0,0)--(-2,-2);
\draw [thick] (0,0)--(2,-2);

\draw [fill] (0,2) circle (3pt);
\draw [fill] (0,0) circle (3pt);
\draw [fill] (-2,-2) circle (3pt);
\draw [fill] (2,-2) circle (3pt);

\node [above] at (0.1,2) {\tiny $\mathbb{Z}^{2}$};
\node [below] at (0,-0.3) {\tiny $\mathbb{Z}^{2}$};
\node [below left] at (-2,-2) {\tiny $\mathbb{Z}^{2}$};
\node [below right] at (2,-2) {\tiny $\mathbb{Z}^{2}$};

\node [right] at (0,1) {\tiny $\mathbb{Z}$};
\node [left] at (-0.9,-0.9) {\tiny $\mathbb{Z}$};
\node [right] at (1,-0.9) {\tiny $\mathbb{Z}$};
\end{scope}
\end{tikzpicture}
\end{center}
\caption{The left hand side is a vertex-elliptic abelian JSJ-decomposition of $A_{\Gamma}$; the right hand side is an edge-elliptic abelian JSJ-decomposition of $BB_{\Gamma}$.}
\label{vertex-elliptic and edge-elliptic abelian JSJ-decompositions of example graph}
\end{figure}
\end{example}

\begin{remark}
The Bestvina--Brady group $BB_\Gamma$ in Example~\ref{JSJ example} is $\mathrm{CAT}(0)$, and therefore, has quadratic Dehn function. However, Papadima and Suciu showed that this $BB_\Gamma$ is not isomorphic to any RAAG; see \cite[Proposition 9.4]{papadimaandsuciualgebraicinvariantsforbestvinabradygroups}
\end{remark}

\section*{acknowledgements}

The author thanks Pallavi Dani and Tullia Dymarz for their constant support. The author thanks Michael Hull for bringing Zaremsky's work to his attention. The author is grateful for the referee's valuable comments and suggestions. 

\bibliographystyle{plain}
\bibliography{20220925}
\end{document}